\documentclass[preprint,a4paper,authoryear]{imsart}
\usepackage{amssymb,amsthm,amsmath}
\usepackage[sort&compress]{natbib}
\usepackage{hyperref}

\numberwithin{equation}{section}


\newcommand{\R}{\mathbb{R}}

\newcommand{\Y}{\boldsymbol{Y}}

\newcommand{\Lb}{\boldsymbol{L}}



\theoremstyle{plain}
\newtheorem{theorem}{Theorem}[section]
\newtheorem{proposition}[theorem]{Proposition}

\theoremstyle{definition}

\theoremstyle{remark}

\newcounter{assumptionlevy}

\newcounter{assumptioneigen}

\begin{document}

\begin{frontmatter}

\title{Correction to: Multivariate CARMA processes, continuous-time state space models and complete regularity of the innovations of the sampled processes, Bernoulli 18, pp. 46-63, 2012}
\runtitle{}
\begin{aug}
	\author[A]{\fnms{Robert}~\snm{Stelzer}\ead[label=e1]{robert.stelzer@uni-ulm.de}\ead[label=u1,url]{https://www.uni-ulm.de/mawi/finmath}}

	\address[A]{Institute of Mathematical Finance, Ulm University, Helmholtzstrasse 18, D-89081 Ulm, Germany\printead[presep={,\ }]{e1,u1}}
	
\end{aug}

\begin{abstract}
A serious flaw in the proof of the equivalence of continuous time state space models and MCARMA processes spotted in \cite{FasenSchenk2024} is corrected. We point out that likewise an issue in the proof of Theorem 3.2 in \cite{brockwell2011parametric} can be resolved and, hence, any MCARMA process and linear state space model has both a controller and an observer canonical representation. Equivalently, the transfer function has both a left and right matrix fraction representation.
\end{abstract}

\begin{keyword}[class=AMS]
\kwd[Primary ]{60G10} 
\kwd[; secondary ]{60G51} 
\end{keyword}

\begin{keyword}
\kwd{multivariate CARMA process}
\kwd{state space representation}
\end{keyword}

\end{frontmatter}
\section{Introduction}
Multivariate CARMA processes as introduced in \cite{marquardt2007multivariate} are used in various applications and have also been implemented in R packages (see e.g. \cite{Tomasson2018}).
The statistical inference theory developed in \cite{FasenKimmig2020,FasenKimmig2017,FasenMayer2022,FasenScholz2019,SchlemmStelzer2012EJS}, for instance, hinges crucially on the equivalence of the class of L\'evy-driven MCARMA processes to the class  of L\'evy-driven linear state space models, as identifiability is typically ensured by considering state space models in echolon form.

But as observed in \cite{FasenSchenk2024} the proof of this equivalence (Corollary 3.4) in \cite{schlemmmixing2010} is incorrect, since Appendix 2 of \cite{Caines1988} does not guarantee that the leading coefficient of the ``denominator'' in the left matrix fraction representation of the transfer function can be chosen to be the identity. The same problem - likewise spotted by \cite{FasenSchenk2024} -  with right matrix fractions arises in the proof of Theorem 3.2 in \cite{brockwell2011parametric}, where the authors refer to Lemma 6.3-8 of \cite{Kailath1980}, which also does not establish that the leading coefficient in the ``denominator'' can be taken to be the identity. 

Hence, we shall prove below that any L\'evy-driven linear state space model can be represented in both observer and controller canonical form and that its transfer function always has both a left and a right matrix fraction representation with the ``denominator'' having the identity as the leading coefficient. This in particular implies that Corollary 3.4 in \cite{schlemmmixing2010} and Theorem 3.2 in \cite{brockwell2011parametric} are correct.

In the following we refer the reader to our original paper \cite{schlemmmixing2010} for any unexplained notions and notation. We will refer to equations, definitions, theorems etc. in that paper by putting the letter ``P'' in front of the respective number.

\section{Results}
Note first that there is a typo in Equation (P3.4b). In line with  \cite[Theorem 3.12]{marquardt2007multivariate}  it should read
\[
\beta_{p-j} = I_{\{0,\ldots,q\}}(j)\left[-\sum_{i=1}^{p-j-1}{A_i\beta_{p-j-i}+B_{q-j}}\right].\qquad\qquad\qquad\qquad\hfill \mathrm{(P3.4b)}
\]
\begin{proposition}[Observer canonical form/MCARMA process representation]\label{prop:observer}
	For the output process $\mathbf{Y}$ of a L\'evy-driven state space model of the form (P3.5) there exist a $p\in\mathbb{N}$ and matrices $\mathcal{A,B,C}$ of the form (P3.4) such that $\mathbf{Y}$  is the output process of an MCARMA state space representation (linear state space model in observer canonical form) of the form (P3.3).
\end{proposition}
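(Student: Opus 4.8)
The plan is to realise the transfer function of the given state space model directly as a left matrix fraction $P^{-1}Q$ with $P$ monic (identity as leading coefficient) and $\deg Q<\deg P$, \emph{without} requiring $P$ and $Q$ to be left coprime. The resulting MCARMA representation will in general fail to be minimal, but that is immaterial: Proposition~\ref{prop:observer} only asks for \emph{some} $p$ and matrices $\mathcal A,\mathcal B,\mathcal C$ of the form (P3.4), not for a minimal one.

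First I would write $H(z)=C(zI_N-A)^{-1}B$ for the transfer function of the state space model (P3.5), where $N$ is its state dimension and $m$ the dimension of $\Y$; this is a strictly proper rational matrix function, stable since the model (P3.5) is. Let $\chi(z)=\det(zI_N-A)$ be the characteristic polynomial of $A$, monic of degree $N$, and set
\[
P(z):=\chi(z)\,I_m,\qquad Q(z):=C\,\adj(zI_N-A)\,B.
\]
From the adjugate identity $(zI_N-A)^{-1}=\chi(z)^{-1}\adj(zI_N-A)$ one obtains $P(z)^{-1}Q(z)=H(z)$, where $P$ is a monic $m\times m$ polynomial of degree $N$ and $Q$ is a polynomial of degree at most $N-1$. (Replacing $\chi$ by the minimal polynomial of $A$ gives the same conclusion with a smaller degree, since the GCD of the entries of $\adj(zI_N-A)$ then cancels.)

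Next I would set $p:=N$, take $q:=\deg Q$, let $A_1,\dots,A_p$ and $B_0,\dots,B_q$ be the coefficient matrices of $P$ and $Q$ (the degenerate case $Q\equiv 0$, i.e.\ $\Y\equiv 0$, being trivial), and form the $\beta_j$ from the corrected recursion (P3.4b). Since $q<p$, these data produce matrices $\mathcal A,\mathcal B,\mathcal C$ of the form (P3.4) and hence a linear state space model of the form (P3.3) in observer canonical form, whose transfer function equals $P(z)^{-1}Q(z)=H(z)$ by \cite[Theorem 3.12]{marquardt2007multivariate}. The eigenvalues of $\mathcal A$ are the zeros of $\det P(z)=\chi(z)^m$, i.e.\ those of $A$, so this model is stable too; since both models are driven by the same L\'evy process and share the same kernel $t\mapsto\mathcal C e^{\mathcal A t}\mathcal B=C e^{A t}B$ on $[0,\infty)$ (both being the inverse Laplace transform of $H$), their stationary outputs coincide, and $\Y$ is the output of (P3.3). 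That is the desired representation.

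I do not expect a genuine technical obstacle here; the work is in identifying what went wrong before, namely the attempt to obtain a \emph{coprime} left matrix fraction with identity leading coefficient. In a coprime left matrix fraction the row degrees of $P$ must equal the observability indices of $H$, which need not all be equal, and then $P$ cannot be monic — exactly the guarantee that \cite[Appendix~2]{Caines1988} and \cite[Lemma~6.3-8]{Kailath1980} do not provide. Dropping minimality and accepting the realization above, of dimension $mp$ rather than the McMillan degree of $H$, removes this obstruction while still yielding a legitimate observer-canonical MCARMA representation.
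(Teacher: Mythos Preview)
Your proof is correct and is essentially the paper's argument written out in full: the paper appeals to Proof~2 of Theorem~17.1 in \cite{Brockett2015}, which constructs the observer canonical realisation of a strictly proper rational $H$ by clearing a \emph{scalar} common denominator of its entries --- precisely your choice $P(z)=\chi(z)\,I$ and $Q(z)=C\,\adj(zI_N-A)\,B$, followed by the companion-form assembly that (P3.4)/(P3.4b) encodes. Where you verify stability and equality of the kernels $t\mapsto Ce^{At}B=\mathcal C e^{\mathcal A t}\mathcal B$ by hand, the paper simply cites Lemma~3.2 of \cite{SchlemmStelzer2012EJS}; apart from that (and the notational point that the output dimension is $d$ rather than $m$ in the paper), the two proofs coincide.
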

\begin{proof}
	Denote by $H(z)=C(z\mathbb{I}_N-A)^{-1} B$ the transfer function of the linear state space model (P3.5). Two linear state space models driven by the same L\'evy process produce the same output process $\mathbf{Y}$ if they have the same transfer function (see e.g. Lemma 3.2 of \cite{SchlemmStelzer2012EJS}). 
	
	Necessarily, every element of the matrix $H(z)$  is a rational function in $z$ with the degree of the denominator exceeding the degree of the numerator (see p. 106 in \cite{Brockett2015}). Now Proof 2 of Theorem 17.1 of \cite{Brockett2015} gives matrices $\mathcal{A,B,C}$ of the form (P3.4) with $H(z)=\mathcal{C}(z\mathbb{I}_N-\mathcal{A})^{-1} \mathcal{B}$. These matrices define a model of the  form (P3.3) with the given output process $\mathbf Y$.
	
\end{proof}
\begin{proposition}[Controller canonical form]\label{prop:controller}
	For the output process $\mathbf{Y}$ of a L\'evy-driven state space model of the form (P3.5) there exist a $p\in\mathbb{N}$ and matrices $\mathfrak{A,B,C}$ of the form 
	\begin{align}
	 \mathfrak{A} =& \left(\begin{array}{ccccc}
			0 & \mathbb{I}_m & 0 & \ldots & 0 \\
			0 & 0 & \mathbb{I}_m & \ddots & \vdots \\
			\vdots && \ddots & \ddots & 0\\
			0 & \ldots & \ldots & 0 & \mathbb{I}_m\\
			-\widetilde A_p & -\widetilde A_{p-1} & \ldots & \ldots & -\widetilde A_1
		\end{array}\right)\in M_{pm}(\R),\\
	\mathfrak{B}=&\left(0_m,\ldots,0_m,\mathbb{I}_m\right)^T\in M_{pm,m}(\R)\text{ and}\\
	\mathfrak{C}=&\left(\begin{array}{ccc}\widetilde B_0, & \ldots, & \widetilde B_{p-1}\end{array}\right)\in M_{d,pm}(\mathbb{R})
	\end{align}
such that $\mathbf{Y}$  is the output process of  a linear state space model in controller canonical form 
	\begin{equation}
		\mathrm{d}\widetilde{\boldsymbol{G}}(t)=\mathfrak{A}\widetilde{\boldsymbol{G}}(t)\mathrm{d}t+\mathfrak{B}\mathrm{d}\Lb(t),\quad \Y(t)=\mathfrak{C}\widetilde{\boldsymbol{G}}(t),\quad t\in\R.
	\end{equation}
	.
\end{proposition}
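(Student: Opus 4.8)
The plan is to obtain the controller canonical form from the observer canonical form of Proposition~\ref{prop:observer} by transposition, exploiting that the property ``the leading coefficient of the denominator is the identity'', which Proposition~\ref{prop:observer} establishes, is trivially preserved under transposition and is precisely what interchanges left and right matrix fraction representations. Let $H(z)=C(z\mathbb{I}_N-A)^{-1}B$ be the (strictly proper, rational) transfer function of the given model (P3.5); then $H(z)^T=B^T(z\mathbb{I}_N-A^T)^{-1}C^T$ is the transfer function of the linear state space model with matrices $(A^T,C^T,B^T)$, which is again of the form (P3.5), now with output dimension $m$ and driving dimension $d$. Applying Proposition~\ref{prop:observer} to this transposed model gives a $p\in\N$ and matrices $\mathcal{A},\mathcal{B},\mathcal{C}$ of the form (P3.4) with $H(z)^T=\mathcal{C}(z\mathbb{I}_{pm}-\mathcal{A})^{-1}\mathcal{B}$.

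Next I would translate this into matrix fraction language: by the correspondence between the form (P3.4)/(P3.3) and MCARMA processes (\cite[Theorem~3.12]{marquardt2007multivariate} together with the corrected equation (P3.4b)), the realization above means that $H^T$ has the left matrix fraction representation $H(z)^T=P(z)^{-1}Q(z)$, where $P(z)=\mathbb{I}_m z^p+A_1 z^{p-1}+\cdots+A_p$ is the \emph{monic} polynomial built from the coefficient blocks of $\mathcal{A}$ and $Q$ is a polynomial of degree at most $p-1$ with values in $M_{m,d}(\R)$. Transposing, $H(z)=Q(z)^T\bigl(P(z)^T\bigr)^{-1}$, a \emph{right} matrix fraction representation whose denominator $P(z)^T=\mathbb{I}_m z^p+A_1^T z^{p-1}+\cdots+A_p^T$ is still monic of degree $p$ and whose numerator $Q(z)^T$ has degree at most $p-1$. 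Setting $\widetilde A_j:=A_j^T$ and writing $Q(z)^T=\widetilde B_0+\widetilde B_1 z+\cdots+\widetilde B_{p-1}z^{p-1}$ with $\widetilde B_j\in M_{d,m}(\R)$, the controller canonical realization associated with a right matrix fraction with monic denominator --- the dual of the construction just used, which one verifies by a direct computation expressing everything through the first block component of $\widetilde{\boldsymbol{G}}$ in $\mathrm{d}\widetilde{\boldsymbol{G}}(t)=\mathfrak{A}\widetilde{\boldsymbol{G}}(t)\,\mathrm{d}t+\mathfrak{B}\,\mathrm{d}\Lb(t)$, $\Y(t)=\mathfrak{C}\widetilde{\boldsymbol{G}}(t)$ --- is given exactly by the matrices $\mathfrak{A},\mathfrak{B},\mathfrak{C}$ of the proposition, and it satisfies $H(z)=\mathfrak{C}(z\mathbb{I}_{pm}-\mathfrak{A})^{-1}\mathfrak{B}$. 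Finally, as in the proof of Proposition~\ref{prop:observer}, two linear state space models driven by the same L\'evy process with the same transfer function have the same output process (Lemma~3.2 of \cite{SchlemmStelzer2012EJS}), so the controller canonical model driven by the original $\Lb$ produces the given output $\Y$.

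The step I expect to matter most is the passage from the state space form (P3.4) to the matrix fraction description and back: what genuinely has to be checked is that the denominator polynomial for $H^T$ really has the identity as its leading coefficient --- this is exactly the point that was missing in \cite{schlemmmixing2010,brockwell2011parametric} and is now supplied by Proposition~\ref{prop:observer} --- after which transposition preserves it for free. The remaining work (confirming via (P3.4b) that the coefficient blocks match up, and that a right matrix fraction with monic denominator realizes to the stated $\mathfrak{A},\mathfrak{B},\mathfrak{C}$) is routine companion-form bookkeeping, and I do not anticipate a harder obstacle beyond keeping the two transpositions and the block structures straight.
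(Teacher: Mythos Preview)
Your argument is correct, but it follows a genuinely different route from the paper's. The paper's proof is a single line: it observes that the argument is analogous to that of Proposition~\ref{prop:observer}, only now invoking Proof~1 (rather than Proof~2) of Theorem~17.1 in \cite{Brockett2015}, which directly produces the controller companion realization from the transfer function. You instead bootstrap from Proposition~\ref{prop:observer} via transposition: apply the observer result to $(A^T,C^T,B^T)$ to get a monic left fraction for $H^T$, transpose to a monic right fraction for $H$, and then verify by hand that the controller companion triple $(\mathfrak{A},\mathfrak{B},\mathfrak{C})$ realizes it. Your approach is more self-contained (it needs only the one Brockett reference already used in Proposition~\ref{prop:observer}) and makes the observer/controller duality via transposition explicit; on the other hand it front-loads the matrix-fraction translation that the paper defers to its next proposition, and the ``routine companion-form bookkeeping'' you allude to is essentially what Brockett's Proof~1 (or the computation in \cite{brockwell2011parametric}) does anyway, so the paper's direct citation is the shorter path.
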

\begin{proof}
The proof is analogous to the one of Proposition \ref{prop:controller} only that one now uses Proof 1 of Theorem 17.1 of \cite{Brockett2015}.
\end{proof}
\begin{proposition}[Matrix fraction representations]
	Let $H$ be the transfer function  of a L\'evy-driven  state space model of the form (P3.5). There exist $p,q,\tilde q\in\mathbb{N}_0$, $p>q, p>\tilde q$ and
	polynomials $P\in M_d(\mathbb{R}[z]),Q\in M_{d,m}(\mathbb{R}[z])$ (left matrix fraction description) as well as $\widetilde P \in M_m(\mathbb{R}[z]), \widetilde Q\in M_{d,m}(\mathbb{R}[z])$ (right matrix fraction description) such that
	\[
	H(z)=C(z\mathbb{I}_N-A)^{-1} B=P^{-1}(z)Q(z)=\widetilde Q(z) \widetilde P^{-1}(z)\,\,\forall z \in \mathbb{C}.
	\]
	with
	\begin{eqnarray*}
		P(z)=\mathbb{I}_dz^p+A_1z^{p-1}+\ldots+A_p, &\qquad
		Q(z)=B_0z^q+B_1 z^{q-1}+\ldots+B_q,\\
	\widetilde	P(z)=\mathbb{I}_mz^p+\widetilde A_1z^{p-1}+\ldots+\widetilde A_p,&\qquad
\widetilde	Q(z)=\widetilde B_0z^{\tilde q}+\widetilde B_1 z^{\tilde q-1}+\ldots+\widetilde B_{\tilde q}.
	\end{eqnarray*}
\end{proposition}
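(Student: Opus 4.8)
The plan is to \emph{read off} the two matrix fraction descriptions from the two canonical forms already constructed in Propositions~\ref{prop:observer} and~\ref{prop:controller}, so that monicity of the two ``denominators'' is built in rather than asserted, and then to reconcile the two a~priori distinct degrees by a harmless padding step. This also makes transparent why the difficulty flagged in \cite{FasenSchenk2024} does not arise here: we are going \emph{from} a canonical realization \emph{to} a matrix fraction, not the other way around.

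\emph{Left matrix fraction.} By Proposition~\ref{prop:observer} there are a $p_1\in\N$ and matrices $\mathcal{A},\mathcal{B},\mathcal{C}$ of the form (P3.4) with $H(z)=\mathcal{C}(z\mathbb{I}-\mathcal{A})^{-1}\mathcal{B}$, where $\mathcal{A}$ is the block companion matrix built from $-A_1,\dots,-A_{p_1}\in M_d(\R)$, $\mathcal{C}=(\mathbb{I}_d,0,\dots,0)$, and $\mathcal{B}$ is stacked from blocks $\beta_1,\dots,\beta_{p_1}\in M_{d,m}(\R)$ given by the corrected relation (P3.4b) (so that (P3.4) encodes an MCARMA$(p_1,q_1)$ realization with $q_1<p_1$). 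I would then verify the identity $\mathcal{C}(z\mathbb{I}-\mathcal{A})^{-1}\mathcal{B}=P(z)^{-1}Q(z)$, where $P(z)=\mathbb{I}_dz^{p_1}+A_1z^{p_1-1}+\dots+A_{p_1}$ and $Q(z)=B_0z^{q_1}+B_1z^{q_1-1}+\dots+B_{q_1}$, by the standard device: denoting by $X_1(z),\dots,X_{p_1}(z)$ the $M_{d,m}$-valued block components of $X(z):=(z\mathbb{I}-\mathcal{A})^{-1}\mathcal{B}$ and expanding the block rows of $(z\mathbb{I}-\mathcal{A})X(z)=\mathcal{B}$, the companion structure yields a first-order recursion for the $X_j(z)$ that telescopes to $P(z)X_1(z)=Q(z)$; since $\mathcal{C}X(z)=X_1(z)$ this is precisely $H(z)=P(z)^{-1}Q(z)$. (This is the computation underlying \cite[Theorem~3.12]{marquardt2007multivariate}, which one may alternatively invoke directly.) By construction $P$ is monic of degree $p_1$ and $\deg Q=q_1<p_1$, the degree bound being forced by the companion structure of $\mathcal{A}$.

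\emph{Right matrix fraction and common degree.} By Proposition~\ref{prop:controller} there are a $p_2\in\N$ and matrices $\mathfrak{A},\mathfrak{B},\mathfrak{C}$ of the form displayed there with $H(z)=\mathfrak{C}(z\mathbb{I}-\mathfrak{A})^{-1}\mathfrak{B}$. Transposing, $H(z)^T=\mathfrak{B}^T(z\mathbb{I}-\mathfrak{A}^T)^{-1}\mathfrak{C}^T$; conjugating by the permutation $J$ that reverses the order of the $m\times m$ blocks sends $(\mathfrak{A}^T,\mathfrak{C}^T,\mathfrak{B}^T)$ to a triple of exactly the observer type of the previous paragraph, now built from $-\widetilde A_1^T,\dots,-\widetilde A_{p_2}^T$, with $\mathfrak{B}^T J=(\mathbb{I}_m,0,\dots,0)$ in the role of $\mathcal{C}$. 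Hence the same block recursion gives $H(z)^T=\widehat P(z)^{-1}\widehat Q(z)$ with $\widehat P\in M_m(\R[z])$ monic of degree $p_2$ and $\deg\widehat Q\le p_2-1$; putting $\widetilde P:=\widehat P^{T}$ and $\widetilde Q:=\widehat Q^{T}$ and transposing back yields $H(z)=\widetilde Q(z)\widetilde P(z)^{-1}$ with $\widetilde P(z)=\mathbb{I}_mz^{p_2}+\widetilde A_1z^{p_2-1}+\dots+\widetilde A_{p_2}$. (Equivalently one can run the dual block computation on the controller companion form directly.) Finally set $p:=\max(p_1,p_2)$: replacing $(P,Q)$ by $(z^{p-p_1}P,z^{p-p_1}Q)$ leaves the quotient unchanged, keeps the denominator monic of degree $p$, and makes the numerator degree $\le p-1$; symmetrically replace $(\widetilde Q,\widetilde P)$ by $(z^{p-p_2}\widetilde Q,z^{p-p_2}\widetilde P)$. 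Letting $q,\widetilde q\in\N_0$ denote the resulting numerator degrees, we obtain $p>q$, $p>\widetilde q$, $P=\mathbb{I}_dz^p+\dots$, $\widetilde P=\mathbb{I}_mz^p+\dots$ and $H=P^{-1}Q=\widetilde Q\,\widetilde P^{-1}$, which is the claim.

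The step I expect to be most delicate is the identity $\mathcal{C}(z\mathbb{I}-\mathcal{A})^{-1}\mathcal{B}=P(z)^{-1}Q(z)$ and its dual: although entirely classical, carrying it out cleanly requires being careful about the precise block layout of (P3.4) and about the (now corrected) recursion (P3.4b), and one has to check that the transposition in the controller case really lands back in observer form and that the padding preserves both monicity and the strict inequalities $p>q$, $p>\widetilde q$. Everything else is bookkeeping.
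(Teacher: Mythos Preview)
Your overall strategy coincides with the paper's: start from the observer and controller canonical realizations of Propositions~\ref{prop:observer} and~\ref{prop:controller} and \emph{derive} the left and right matrix fraction descriptions from them, so that monicity of the denominators is automatic. For the left MFD both you and the paper invoke the computation behind \cite[Theorem~3.12]{marquardt2007multivariate}.

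There are two small differences in execution. First, for the right MFD you reduce to the left case by transposing and conjugating the controller triple into observer form; the paper instead cites the direct block computation in the proof of \cite[Theorem~3.2]{brockwell2011parametric}. These are dual versions of the same recursion, so nothing is gained or lost either way (your caution about the exact block layout after conjugation is warranted but routine). Second, and more interestingly, you reconcile the a~priori distinct degrees $p_1,p_2$ by padding with powers of $z$, whereas the paper observes that Proofs~1 and~2 of \cite[Theorem~17.1]{Brockett2015} both take $p$ to be the degree of the least common denominator of the rational entries of $H$, so $p_1=p_2$ from the outset and no padding is needed. Your padding is correct and self-contained, but once one notices the common origin of the two $p$'s it becomes superfluous.
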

\begin{proof}
	\emph{Left matrix fraction:}
Let  $\mathcal{A,B,C}$ be the matrices obtained by Proposition \ref{prop:observer}, define $P,p$ by the elements/dimensions of $\mathcal A$, set $q=p- \min(\{i=1,\ldots,p:\,\beta_i\not =0\}$ and $B_{q-j}=\beta_{p-j}+\sum_{i=1}^{p-j-1}{A_i\beta_{p-j-i}},\,j=0,\ldots,q$. Defining $Q$ accordingly the arguments in the first step of the proof of Theorem P3.3 combined with   \cite{marquardt2007multivariate},  Theorem 3.12, establish that $H(z)=P^{-1}(z)Q(z)$.

	\emph{Right matrix fraction:}
Likewise let  $\mathfrak{A,B,C}$  be the matrices obtained by Proposition \ref{prop:controller}  and define $\widetilde P, \widetilde Q, p, \tilde q$  via their elements/dimensions. Then the arguments in the proof of  \cite{brockwell2011parametric}, Theorem 3.2, show 	$H(z)=\widetilde Q(z) \widetilde P^{-1}(z)$.

That the degrees of the polynomials $P,\widetilde P$ agree follows from comparing Proof 1 and 2 of  Theorem 17.1 of \cite{Brockett2015}.
\end{proof}
Whereas in the original paper and in \cite{brockwell2011parametric}, respectively, the existence of the observer and controller canonical form, respectively, was incorrectly  a consequence of the existence of the matrix left and right factorization with the leading coefficient of the ``denominator'' being the identity, in our above flow of arguments it is now actually the other way round.

\bibliography{library}
\bibliographystyle{imsart-nameyear}

\end{document}